\documentclass[11pt,a4paper]{article}
\usepackage[margin=1in]{geometry}
\usepackage[T1]{fontenc}
\usepackage[utf8]{inputenc}
\usepackage{lmodern}
\usepackage{microtype}
\usepackage{amsmath,amssymb,amsthm,mathtools}
\usepackage{enumitem}
\usepackage{hyperref}
\usepackage{cleveref}
\usepackage{parskip}

\hypersetup{hidelinks}

\newtheorem{theorem}{Theorem}[section]
\newtheorem{proposition}[theorem]{Proposition}
\newtheorem{lemma}[theorem]{Lemma}
\newtheorem{corollary}[theorem]{Corollary}
\theoremstyle{definition}
\newtheorem{definition}[theorem]{Definition}
\newtheorem{remark}[theorem]{Remark}
\newtheorem{problem}[theorem]{Problem}

\newcommand{\R}{\mathbb{R}}
\newcommand{\dd}{\,\mathrm{d}}
\newcommand{\abs}[1]{\left\lvert #1 \right\rvert}
\DeclareMathOperator{\dist}{dist}
\DeclareMathOperator{\diam}{diam}

\title{Rectifiability of the straight visible\\ boundary and the limits of segment-based Hardy criteria}
\author{H. Abbas \thanks{Faculty of Economics, University of Saida, Algeria \text{a.hafida@yahoo.fr}} \and  A. Azzouz \thanks{Faculty of Science, University of Naama, Algeria. Corresponding author : \text{abdelhalim.azzouz.cus@gmail.com}.}}
\date{}

\begin{document}

\maketitle

\begin{abstract}
Weighted Hardy inequalities on a domain $\Omega\subset\R^n$ can be deduced from lower Hausdorff-content bounds on suitable visible parts of the boundary, and it is natural to look for visible sets described by straight segments, which are easier to verify than the curve-based visual boundary of Koskela and Lehrb\"ack. We show that this approach is subject to an intrinsic obstruction. If a boundary point $\xi$ is seen from $x\in\Omega$ along a segment satisfying a uniform thickness condition, then $\Omega$ contains a truncated cone with vertex $\xi$; consequently the quantitatively straight visible boundary $\partial^{\mathrm{str}}_{x,\alpha}\Omega\cap B(x,C_0d_\Omega(x))$ is a finite union of Lipschitz graphs, is $(n-1)$-rectifiable, and carries no $\lambda$-Hausdorff content for any $\lambda>n-1$. Since content conditions with $\lambda\le n-1$ already imply the $(p,\beta)$-Hardy inequality for $\beta<p-n+\lambda$ without any visibility assumption, segment-based criteria are either unsatisfiable or redundant: they never reach the regime $\beta\ge p-1$ in which visibility is genuinely needed. We show that the exponent $n-1$ is attained, so that the rectifiability theorem is sharp; we exhibit a boundary point lying in the visual boundary of Koskela and Lehrb\"ack but in no $\partial^{\mathrm{str}}_{x,\alpha}\Omega$; and we observe that raw straight visibility, without the thickness condition, escapes the obstruction, is sufficient for $\lambda\le n-1$, and remains open for $\lambda>n-1$.
\end{abstract}

\noindent\textbf{Keywords:} weighted Hardy inequality; visual boundary; Hausdorff content; rectifiability; cone condition; quantitative visibility.

\noindent\textbf{Mathematics Subject Classification (2020):} Primary 26D10; Secondary 46E35, 28A75.

\section{Introduction}\label{sec:intro}

For $1<p<\infty$ and $\beta\in\R$, a domain $\Omega\subset\R^n$ is said to admit the \emph{$(p,\beta)$-Hardy inequality} if there is a constant $C>0$ with
\begin{equation}\label{eq:weighted-hardy}
\int_\Omega \abs{u(x)}^p\, d_\Omega(x)^{\beta-p}\,\dd x
\le C
\int_\Omega \abs{\nabla u(x)}^p\, d_\Omega(x)^\beta\,\dd x
\qquad\text{for every }u\in C_c^\infty(\Omega),
\end{equation}
where $d_\Omega(x):=\dist(x,\partial\Omega)$. The case $p=2$, $\beta=0$ is the classical Hardy inequality.

Modern sufficient conditions for \eqref{eq:weighted-hardy} are geometric and local, and they are read at the scale $d_\Omega(x)$. Two of them frame the present paper. The first requires nothing but a lower Hausdorff-content bound on the boundary: by a theorem of Lehrb\"ack \cite{Lehrback2014}, if
\begin{equation}\label{eq:density-intro}
\mathcal H^\lambda_\infty\bigl(\partial\Omega\cap B(x,2d_\Omega(x))\bigr)\ge C_0\, d_\Omega(x)^\lambda
\qquad\text{for every }x\in\Omega,
\end{equation}
with $0\le\lambda\le n-1$, then $\Omega$ admits the $(p,\beta)$-Hardy inequality for all $\beta<p-n+\lambda$. The second replaces $\partial\Omega$ by the \emph{visual boundary} $v_x(c)\text{--}\partial\Omega$ of Koskela and Lehrb\"ack \cite{KoskelaLehrback2009}, consisting of those boundary points that can be reached from $x$ by a curve obeying a quantitative John-type condition. The two conditions are not interchangeable. Since $\lambda\le n-1$ in \eqref{eq:density-intro} forces $\beta<p-1$, and since the accessibility requirement may be dispensed with altogether when $\beta\le0$ \cite[Section~4]{Lehrback2014}, visibility carries genuine weight exactly in the range
\[
\beta\ge p-1,\qquad\text{equivalently}\qquad \lambda>n-1 .
\]

Because curves are awkward to produce, one would like to certify the hypothesis of \cite{KoskelaLehrback2009} using the simplest possible curve, the segment itself. For $x\in\Omega$ and $\alpha\in(0,1]$, let
\[
\partial^{\mathrm{str}}_{x,\alpha}\Omega
:=\Bigl\{\xi\in\partial\Omega:\ [x,\xi)\subset\Omega
\ \text{ and }\ d_\Omega((1-t)x+t\xi)\ge\alpha\min\{t,1-t\}\abs{x-\xi}\ \ \forall t\in(0,1)\Bigr\},
\]
the \emph{quantitatively straight visible boundary}, and let
\[
\partial_x^{\mathrm{dir}}\Omega:=\{\xi\in\partial\Omega:\ [x,\xi)\subset\Omega\}
\]
be its thickness-free counterpart. The set $\partial^{\mathrm{str}}_{x,\alpha}\Omega$ is defined directly from $\Omega$, requires no auxiliary family of boundary pieces, and is contained in $v_x(\alpha^{-1})\text{--}\partial\Omega$, so that a content bound on it implies the hypothesis of \cite{KoskelaLehrback2009}. The purpose of this paper is to show that this apparent gain is illusory, and to say exactly why.

The mechanism is a cone. Reading the thickness condition near the endpoint $\xi$ shows that $\Omega$ must contain a truncated cone with vertex $\xi$, axis pointing towards $x$, and aperture controlled by $\alpha$ (\Cref{lem:cone}). A set of boundary points each carrying an interior cone of fixed aperture, with axes confined to a small angular sector, is a Lipschitz graph. Splitting the directions from $x$ into finitely many sectors therefore yields our main result.

\begin{theorem}[see \Cref{thm:rectifiable}]
Let $\Omega\subset\R^n$ be a domain, $x\in\Omega$, $\alpha\in(0,1]$ and $C_0\ge1$. Then
$\partial^{\mathrm{str}}_{x,\alpha}\Omega\cap B(x,C_0d_\Omega(x))$
is contained in a union of $N=N(\alpha,C_0,n)$ Lipschitz graphs with constants depending only on $\alpha$. In particular it is $(n-1)$-rectifiable,
\[
\mathcal H^{n-1}\bigl(\partial^{\mathrm{str}}_{x,\alpha}\Omega\cap B(x,C_0d_\Omega(x))\bigr)\lesssim_{\alpha,C_0,n} d_\Omega(x)^{n-1},
\]
and
\[
\mathcal H^\lambda_\infty\bigl(\partial^{\mathrm{str}}_{x,\alpha}\Omega\cap B(x,C_0d_\Omega(x))\bigr)=0
\qquad\text{for every }\lambda>n-1 .
\]
\end{theorem}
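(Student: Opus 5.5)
The plan follows the mechanism described just before the statement: an interior cone at each visible point (\Cref{lem:cone}), then a sector decomposition, then the Lipschitz-graph estimates.

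\textbf{Step 1: the cone.} Fix $\xi\in\partial^{\mathrm{str}}_{x,\alpha}\Omega$, set $r=\abs{x-\xi}$ and $e=(x-\xi)/r$. For $t\le 1/2$ the point $\xi+(1-t)re$ lies on $[x,\xi)$ at distance $tr$ from $\xi$, and the thickness condition gives $d_\Omega\ge \alpha tr$ there. Taking the union of the balls $B(\xi+sre,\alpha sr)$ over $s\in(0,1/2]$ gives the truncated cone
\[
\Gamma(\xi,e)=\{y:\ 0<(y-\xi)\cdot e\le r/2,\ \abs{y-\xi-((y-\xi)\cdot e)e}<\alpha'(y-\xi)\cdot e\}\subset\Omega,
\]
with $\alpha'=\alpha/\sqrt{1-\alpha^2}$ (or simply $\alpha'=\alpha/2$ to keep it clean). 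This is the content of \Cref{lem:cone}. Since $\xi\notin\Omega$, we also have $r\ge d_\Omega(x)$. On $B(x,C_0d_\Omega(x))$ we have $r<C_0d_\Omega(x)$, so the cone height $r/2$ is comparable to $d:=d_\Omega(x)$ up to $C_0$.

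\textbf{Step 2: sectors and the graph property.} Cover $S^{n-1}$ by $M=M(\alpha,n)$ caps of angular radius $\theta\ll\alpha$, with centres $\nu_1,\dots,\nu_M$. Let $E_i$ be the set of visible points $\xi$ in the ball with $e(\xi)$ in the $i$-th cap. For such $\xi$, the cone $\Gamma(\xi,e)$ contains a smaller cone $\Gamma_i(\xi)$ of aperture $\alpha/4$, axis $\nu_i$ and height $h=d/4$.

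Next, split $E_i$ further into pieces of diameter less than $c\,d$, with $c=c(\alpha)$ small. This uses $O((C_0/c)^n)$ pieces, which produces the dependence of $N$ on $C_0$.

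Within one piece, take $\xi,\eta$. The claim is that $\eta\notin\Gamma_i(\xi)$, because $\Gamma_i(\xi)\subset\Omega$ and $\eta\in\partial\Omega$. Symmetrically, $\xi\notin\Gamma_i(\eta)$. Since $\abs{\xi-\eta}<cd<h$, the height truncation does not matter. Hence
\[
\abs{(\xi-\eta)\cdot\nu_i}\le L\,\abs{P_i(\xi-\eta)},
\]
where $P_i$ is the orthogonal projection onto $\nu_i^\perp$ and $L=4/\alpha$. So $P_i$ is injective on the piece, and its inverse composed with the $\nu_i$-coordinate is $L$-Lipschitz; extend it by Kirszbraun or McShane. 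The total count is $N\lesssim M\cdot(C_0/c)^n$. The delicate point is choosing $\theta$ so that caps of radius $\theta$ still leave a cone of aperture comparable to $\alpha$ around the common axis $\nu_i$. I expect this bookkeeping of apertures to be the main (mild) obstacle; taking $\theta=\alpha/4$ should suffice.

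\textbf{Step 3: measure conclusions.} Each piece is contained in the graph over $P_i(\text{piece})$, which is a set of diameter at most $cd$ in $\R^{n-1}$. The graph of an $L$-Lipschitz function over such a set has $\mathcal H^{n-1}$ measure at most $(1+L^2)^{(n-1)/2}\,\omega_{n-1}(cd)^{n-1}$. Summing over the $N$ pieces gives the bound $\lesssim_{\alpha,C_0,n}d^{n-1}$ and the rectifiability. Since the $\mathcal H^{n-1}$ measure is finite, $\mathcal H^\lambda=0$ for every $\lambda>n-1$, and $\mathcal H^\lambda_\infty\le\mathcal H^\lambda$ gives the vanishing content.
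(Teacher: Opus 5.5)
Your proposal is correct and is essentially the paper's proof: the interior cone of \Cref{lem:cone}, truncation at a common height comparable to $d_\Omega(x)$ (using $\abs{x-\xi}\ge d_\Omega(x)$), a splitting into direction sectors and spatial pieces of diameter below that height, and the two-sided cone exclusion $\abs{(\xi-\eta)\cdot\nu_i}\le L\abs{P_i(\xi-\eta)}$, which is exactly \Cref{lem:cone-to-graph}. The only slips are in the constants, which the paper avoids by measuring apertures with the chordal distance $\abs{u-v}$, so that the triangle inequality does the bookkeeping. Specifically, the ratio $\alpha/\sqrt{1-\alpha^2}$ fails near the top of the truncated cone (and is undefined at $\alpha=1$), whereas $\alpha'=\alpha$ is valid and then $\theta=\alpha/4$ works; with $\alpha'=\alpha/2$ one needs $\theta$ slightly smaller, since $\arctan(\alpha/2)-\arctan(\alpha/4)<\alpha/4$.
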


The consequence for Hardy inequalities is a dichotomy, stated as \Cref{thm:dichotomy}: a content condition imposed on $\partial^{\mathrm{str}}_{x,\alpha}\Omega$ with exponent $\lambda>n-1$ is satisfied by no domain at all, while for $\lambda\le n-1$ it implies \eqref{eq:density-intro} and therefore yields nothing that the density criterion of \cite{Lehrback2014} does not already give without any visibility assumption. Segment-based criteria are thus unsatisfiable in the regime where visibility matters, and redundant in the regime where they can be satisfied. The exponent $n-1$ is attained --- convex domains and Lipschitz graph domains realise it (\Cref{prop:convex-straight}, \Cref{prop:lipschitz-straight}) --- so the rectifiability theorem is sharp, and the dichotomy is not an artefact of a lossy estimate.

Two further results complete the picture. First, the inclusion $\partial^{\mathrm{str}}_{x,\alpha}\Omega\subset v_x(\alpha^{-1})\text{--}\partial\Omega$ is strict, and strictly so at the level of a single boundary point: in \Cref{prop:strict-inclusion} we construct a planar domain, a point $x$, and a boundary point $\xi$ which is directly visible from $x$ along a segment and is reached by a curve satisfying the visibility condition of \cite{KoskelaLehrback2009} with a finite constant, yet belongs to $\partial^{\mathrm{str}}_{x,\alpha}\Omega$ for no $\alpha\in(0,1]$. Second, the thickness-free set $\partial_x^{\mathrm{dir}}\Omega$ escapes the obstruction entirely, since it carries no cone: a content condition on $\partial_x^{\mathrm{dir}}\Omega$ with $\lambda>n-1$ is not excluded by \Cref{thm:rectifiable}, and whether it implies the $(p,\beta)$-Hardy inequality in that range is, to our knowledge, open (\Cref{prob:raw-fractal}). For $\lambda\le n-1$ it does, for the same reason as above (\Cref{prop:raw-subcritical}).

The paper is organised as follows. \Cref{sec:notation} fixes notation and records the two criteria we use. \Cref{sec:rectifiable} proves the cone property and the rectifiability theorem. \Cref{sec:scope} derives the dichotomy and shows that the exponent $n-1$ is attained. \Cref{sec:strict} contains the strictness example. \Cref{sec:raw} discusses raw straight visibility and states the remaining problem.

\section{Notation and known criteria}\label{sec:notation}

Throughout, $\Omega\subsetneq\R^n$ is a domain, $d_\Omega(x)=\dist(x,\partial\Omega)$, and $B(x,r)$ is the open ball. For $\lambda\ge0$ and $E\subset\R^n$ the $\lambda$-Hausdorff content is
\[
\mathcal H^\lambda_\infty(E):=\inf\Bigl\{\sum_{i=1}^\infty r_i^\lambda\ :\ E\subset\bigcup_{i=1}^\infty B(x_i,r_i)\Bigr\},
\]
and we use repeatedly that $\mathcal H^\lambda_\infty(E)=0$ if and only if $\mathcal H^\lambda(E)=0$. For $x\ne\xi$ we write
\[
[x,\xi):=\{(1-t)x+t\xi:\ 0\le t<1\}.
\]

\begin{definition}[Visible boundaries]\label{def:visible}
Let $x\in\Omega$ and $\alpha\in(0,1]$. The \emph{directly visible boundary} and the \emph{quantitatively straight visible boundary} of $x$ are
\[
\partial_x^{\mathrm{dir}}\Omega:=\{\xi\in\partial\Omega:\ [x,\xi)\subset\Omega\},
\]
\[
\partial^{\mathrm{str}}_{x,\alpha}\Omega
:=\Bigl\{\xi\in\partial_x^{\mathrm{dir}}\Omega:\
d_\Omega\bigl((1-t)x+t\xi\bigr)\ge\alpha\min\{t,1-t\}\abs{x-\xi}\ \text{ for every }t\in(0,1)\Bigr\}.
\]
\end{definition}

We denote by $v_x(c)\text{--}\partial\Omega$ the visual boundary of $x$ with constant $c\ge1$, in the sense of \cite[Section~4]{KoskelaLehrback2009}: the set of $\xi\in\partial\Omega$ that can be joined to $x$ by a rectifiable curve $\gamma:[0,\ell]\to\Omega\cup\{\xi\}$, parametrised by arc length, with $\gamma(0)=x$, $\gamma(\ell)=\xi$, $\ell\le c\abs{x-\xi}$ and
\[
d_\Omega(\gamma(t))\ge c^{-1}\min\{t,\ell-t\}\qquad\text{for }0<t<\ell .
\]
Taking the segment itself as the competing curve gives at once the elementary inclusion
\begin{equation}\label{eq:str-in-visual}
\partial^{\mathrm{str}}_{x,\alpha}\Omega\subset v_x(\alpha^{-1})\text{--}\partial\Omega ,
\end{equation}
which we shall see in \Cref{sec:strict} is strict.

We shall use the following two results as black boxes.

\begin{theorem}[Lehrb\"ack {\cite[Theorem~1.2]{Lehrback2014}}]\label{thm:density-criterion}
Let $\Omega\subset\R^n$ be open and $1<p<\infty$. Suppose there are $0\le\lambda\le n-1$ and $C_0>0$ with
\[
\mathcal H^\lambda_\infty\bigl(\partial\Omega\cap B(x,2d_\Omega(x))\bigr)\ge C_0\,d_\Omega(x)^\lambda
\qquad\text{for every }x\in\Omega .
\]
Then $\Omega$ admits the $(p,\beta)$-Hardy inequality for all $\beta<p-n+\lambda$.
\end{theorem}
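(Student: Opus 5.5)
The statement is quoted from \cite{Lehrback2014} and is used here as a black box. If we had to reprove it, we would take the route of pointwise Hardy inequalities followed by maximal-function integration; the steps below are a plan, not a checked argument.

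\emph{Step 1: pointwise inequality.} Fix $u\in C_c^\infty(\Omega)$, extended by zero, and a point $x\in\Omega$ with $d=d_\Omega(x)$. Put $E=\partial\Omega\cap B(x,2d)$. By Frostman's lemma, the bound $\mathcal H^\lambda_\infty(E)\ge C_0d^\lambda$ yields a measure $\mu$ on $E$ with $\mu(E)\gtrsim d^\lambda$ and $\mu(B(z,r))\le r^\lambda$ for all balls. Since $u=0$ on $E$,
\[
|u(x)|\le \frac{1}{\mu(E)}\int_E |u(x)-u(\xi)|\,\dd\mu(\xi).
\]
For each $\xi\in E$ we would bound $|u(x)-u(\xi)|$ by a chain of balls joining $x$ to $\xi$ inside $B(x,4d)$. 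Averaging over $\mu$ and using the upper growth of $\mu$ to sum the chain contributions, we expect the following for any $q$ with $n-\lambda<q$ (up to the endpoint issue discussed below):
\[
|u(x)|\lesssim d^{\,1-\lambda/q}\Bigl(\sup_{0<r\le 4d} r^{\lambda-n}\int_{B(x,r)}|\nabla u|^q\Bigr)^{1/q}.
\]
Since $\lambda\le n-1$, the condition $n-\lambda<q$ forces $q>1$, and this is what we use.

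\emph{Step 2: choice of $q$ and integration.} Given $\beta<p-n+\lambda$, we would choose $q\in(n-\lambda,p)$ close enough to $p$ that the weights still balance. Raising the pointwise bound to the power $p$ expresses $|u(x)|^p d^{\beta-p}$ through the quantity $M_{\lambda}(|\nabla u|^q)(x)^{p/q}$, where
\[
M_\lambda g(x):=\sup_{0<r\le 4d_\Omega(x)} r^{\lambda-n}\int_{B(x,r)}g
\]
is a fractional maximal function localised at scale $d_\Omega(x)$. The remaining factor is $d^{\beta-\lambda p/q}$. To move this power of $d$ inside the integral, we use that $d_\Omega(y)\le 5d$ on $B(x,4d)$. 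This lets us dominate by an ordinary Hardy--Littlewood maximal function of $|\nabla u|^q d_\Omega^{\gamma}$ for a suitable exponent $\gamma$, after which boundedness of $M$ on $L^{p/q}$ (valid since $p/q>1$) gives the weighted integral.

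\emph{Main obstacle: weight transfer for $\beta\ge0$.} Step 2 is delicate when $\beta$ has the wrong sign. For $\beta\ge0$, points $y$ close to $\partial\Omega$ inside $B(x,4d)$ have $d_\Omega(y)\ll d$, and one cannot simply replace $d^\beta$ by $d_\Omega(y)^\beta$. We would try three things in turn:
\begin{enumerate}[label=(\roman*)]
\item Insert $d_\Omega(y)^{\gamma}$ with $\gamma<0$ chosen so that the exponent inequality $\beta<q-n+\lambda$ absorbs the loss. This is exactly why $q$ must be taken close to $p$.
\item Verify that $d_\Omega^{\gamma}$ is locally integrable against the relevant balls via the content condition, using an Aikawa-type estimate.
\item If the direct route fails, switch to a Whitney-cube decomposition, on which $d_\Omega$ is essentially constant, and sum the pointwise bound cube by cube.
\end{enumerate}

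\emph{Endpoint case.} The case $\lambda=n-1$ needs $q$ slightly above $1$, and this is fine provided $p>1$.
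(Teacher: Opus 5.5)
The paper gives no proof of this statement. It is quoted from Lehrb\"ack and used as a black box, so your plan has to stand on its own.

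\textbf{What works, with corrections.} Steps 1--2 are the standard argument, and they do work for $\beta\le 0$ when $p>n-\lambda$. You must, however, use the ordinary local maximal function $\sup_{r\le 4d}r^{-n}\int_{B(x,r)}\abs{\nabla u}^q$. The fractional form you state is false when $q<\lambda$: test it on a bump of radius $\varepsilon\to0$ centred at $x$. This case occurs within the theorem, e.g.\ for $n=3$, $\lambda=2$, $p<2$.

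There is a second, fixable gap. When $p\le n-\lambda$ (allowed, e.g.\ $\lambda=0$, $p\le n$), the interval $(n-\lambda,p)$ is empty, yet the theorem asserts the inequality for all $\beta<p-n+\lambda\le 0$. The remedy is to weight the chain from the start. On $B(\xi,r)\cap\Omega$ with $\xi\in\partial\Omega$ one has $d_\Omega<r$. So for $\beta<0$ one has $\abs{\nabla u}^q\le r^{-\beta q/p}\abs{\nabla u}^q d_\Omega^{\beta q/p}$ there, and the same Frostman averaging yields
\[
\abs{u(x)}\lesssim d^{1-\beta/p}\Bigl(\sup_{0<r\le 4d}r^{-n}\int_{B(x,r)}\abs{\nabla u}^q d_\Omega^{\beta q/p}\Bigr)^{1/q}
\qquad\text{whenever }\beta<p\Bigl(1-\frac{n-\lambda}{q}\Bigr).
\]
This needs no condition $q>n-\lambda$. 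Letting $q\uparrow p$, it gives every $\beta\le 0$ with $\beta<p-n+\lambda$.

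\textbf{The genuine gap: $0<\beta<p-n+\lambda$.} This range is the whole content of the cited theorem. You correctly identify it as the obstacle, but you only list things to try, and none of them works.
\begin{enumerate}[label=(\roman*)]
\item Your chain consists of balls $B(\xi,r)$ centred on $\partial\Omega$, on which $d_\Omega$ can be arbitrarily small. For $\beta>0$ the bound $d_\Omega<r$ points the wrong way: $\int_{B(\xi,r)}\abs{\nabla u}^q$ is not controlled by $\int\abs{\nabla u}^q d_\Omega^{\beta q/p}$, and no choice of $q$ absorbs the loss. The bookkeeping $\beta<q-n+\lambda$ you invoke belongs to chains of Whitney-type balls, on which $d_\Omega$ is comparable to the radius.
\item Your item (ii) would need $\int_{B(\xi,r)\cap\Omega}d_\Omega^{-s}\lesssim r^{n-s}$. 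This is an \emph{upper} bound on the size of $\partial\Omega$ (an Aikawa/Assouad-dimension condition), which no lower content bound can give. For instance, a union of disjoint open cubes arranged in ever finer grids satisfies the hypothesis with $\lambda=n-1$ and violates every such estimate.
\item Your item (iii) changes nothing. For $x$ in a Whitney cube, the scale-$d$ term of your bound still sees gradient mass in much smaller cubes next to $\partial\Omega$.
\end{enumerate}

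Your treatment of $\beta>0$ also has to use $\lambda\le n-1$ somewhere. Take $(0,1)^2\setminus F$ with $F$ the Sierpi\'nski carpet. It satisfies the density condition with $\lambda=\log 8/\log 3>1$, but its components are squares, so the conclusion fails for $\beta\in[p-1,p-2+\lambda)$. In your plan, that hypothesis only serves to make $q>1$ possible.

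\textbf{What is missing.} You need a chain of Whitney-type balls from $x$ to a set of boundary points of $\lambda$-content $\gtrsim d_\Omega(x)^\lambda$, i.e.\ an accessible (visual) part of $\partial\Omega$ as in \Cref{thm:visual-criterion}. You also need a geometric argument showing that the density condition with $\lambda\le n-1$ produces such a set. This is the role of the chain sets alluded to in \Cref{rem:where-visibility}, and it is where the real work of the theorem lies.
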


\begin{theorem}[Koskela--Lehrb\"ack {\cite[Theorem~1.4]{KoskelaLehrback2009}}]\label{thm:visual-criterion}
Let $1<p<\infty$, let $\Omega\subset\R^n$ be a domain, and assume that there exist $0\le\lambda\le n$, $c\ge1$ and $C>0$ such that
\[
\mathcal H^\lambda_\infty\bigl(v_x(c)\text{--}\partial\Omega\bigr)\ge C\,d_\Omega(x)^\lambda
\qquad\text{for every }x\in\Omega .
\]
Then $\Omega$ admits the $(p,\beta)$-Hardy inequality for all $\beta<p-n+\lambda$.
\end{theorem}

\begin{remark}[Where visibility is needed]\label{rem:where-visibility}
The ranges of \Cref{thm:density-criterion} and \Cref{thm:visual-criterion} differ only through the admissible exponents: the restriction $\lambda\le n-1$ in the former forces $\beta<p-1$. This is not an accident of the proof. Accessibility may be removed altogether when $\beta\le0$, while for $\beta\ge p-1$ the density condition alone is known to be insufficient and accessibility is required; moreover, in $\R^n$ the chain sets used in \cite[Section~4]{Lehrback2014} coincide with suitable visual boundaries in the sense of \cite{KoskelaLehrback2009}. For $\beta=0$ one may also argue through uniform fatness of the complement \cite{Lewis1988,Wannebo1990}. In short, a visibility hypothesis can improve on \Cref{thm:density-criterion} only when it is satisfied with some $\lambda>n-1$.
\end{remark}

\section{The cone property and rectifiability}\label{sec:rectifiable}

Given $\xi\in\R^n$, a unit vector $v$, $\kappa\in(0,1]$ and $h>0$, we write
\[
K(\xi,v,\kappa,h):=\bigl\{\xi+ru:\ 0<r<h,\ u\in S^{n-1},\ \abs{u-v}<\kappa\bigr\}
\]
for the open truncated cone with vertex $\xi$, axis $v$, aperture parameter $\kappa$ and height $h$.

\begin{lemma}[Cone property]\label{lem:cone}
Let $\Omega\subset\R^n$ be a domain, $x\in\Omega$, $\alpha\in(0,1]$ and $\xi\in\partial^{\mathrm{str}}_{x,\alpha}\Omega$. Put $\rho:=\abs{x-\xi}$ and $v:=(x-\xi)/\rho$. Then
\[
K\bigl(\xi,v,\tfrac{\alpha}{2},\tfrac{\rho}{2}\bigr)\subset\Omega .
\]
\end{lemma}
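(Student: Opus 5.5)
The plan is to cover each point of the cone by an open ball centred on the segment $[x,\xi)$ whose radius is guaranteed by the thickness condition in \Cref{def:visible}. First I reparametrise the segment from the endpoint $\xi$. With $\rho=\abs{x-\xi}$ and $v=(x-\xi)/\rho$ we have $(1-t)x+t\xi=\xi+(1-t)\rho v$. Setting $s=1-t$ and $p_s:=\xi+s\rho v$, the defining inequality of $\partial^{\mathrm{str}}_{x,\alpha}\Omega$ reads
\[
d_\Omega(p_s)\ge\alpha\min\{s,1-s\}\rho\qquad\text{for } s\in(0,1).
\]
Moreover $p_s\in[x,\xi)\subset\Omega$.

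Next, fix a point $y=\xi+ru$ of $K(\xi,v,\tfrac{\alpha}{2},\tfrac{\rho}{2})$, so $0<r<\rho/2$ and $\abs{u-v}<\alpha/2$. I match it with the segment point at the same distance from the vertex, $s:=r/\rho\in(0,\tfrac12)$. Then $\min\{s,1-s\}=s$, and the thickness condition gives
\[
d_\Omega(p_s)\ge\alpha r .
\]
On the other hand, since $p_s=\xi+rv$,
\[
\abs{y-p_s}=r\abs{u-v}<\tfrac{\alpha}{2}r<\alpha r\le d_\Omega(p_s).
\]
Hence $y\in B(p_s,d_\Omega(p_s))$.

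Finally, I use the elementary fact that $B(p,d_\Omega(p))\subset\Omega$ for every $p\in\Omega$. The ball is connected, contains the point $p\in\Omega$, and does not meet $\partial\Omega$, by the definition of $d_\Omega(p)$. If it contained a point outside $\Omega$, it would also meet $\partial\Omega$, which is impossible. Applying this with $p=p_s$ gives $y\in\Omega$, and since $y$ was arbitrary the whole cone lies in $\Omega$.

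No step here is a serious obstacle. The only points needing care are the choice $s=r/\rho$, which ensures we stay on the half of the segment near $\xi$ where $\min\{s,1-s\}=s$ (this is exactly why the height is $\rho/2$), and the ball-containment fact just described. The argument in fact gives the cone with aperture parameter $\alpha$, so the stated $\alpha/2$ leaves a factor-two margin.
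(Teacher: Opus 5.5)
Your proposal is correct and follows the paper's proof essentially verbatim: reparametrise the segment from $\xi$, match $y=\xi+ru$ with the segment point $\xi+rv$ (i.e.\ $s=r/\rho<\tfrac12$), and use $\abs{y-(\xi+rv)}<\tfrac{\alpha}{2}r<\alpha r\le d_\Omega(\xi+rv)$. Your explicit justification that $B(p,d_\Omega(p))\subset\Omega$ for $p\in\Omega$, which the paper uses without comment, is correct, and so is your observation that the same argument already gives the cone with aperture parameter $\alpha$.
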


\begin{proof}
For $s\in(0,\tfrac12]$ set $t:=1-s$, so that
\[
(1-t)x+t\xi=\xi+s\rho v .
\]
\Cref{def:visible} gives $\min\{t,1-t\}=s$ and hence
\begin{equation}\label{eq:ball-in-omega}
B(\xi+s\rho v,\ \alpha s\rho)\subset\Omega
\qquad\text{for every }s\in(0,\tfrac12] .
\end{equation}
Let now $y=\xi+ru$ with $0<r<\rho/2$ and $\abs{u-v}<\alpha/2$, and set $s:=r/\rho\in(0,\tfrac12)$. Then
\[
\abs{y-(\xi+s\rho v)}=\abs{ru-rv}=r\abs{u-v}<\tfrac{\alpha}{2}\,r=\tfrac{\alpha}{2}\,s\rho<\alpha s\rho,
\]
so $y\in B(\xi+s\rho v,\alpha s\rho)\subset\Omega$ by \eqref{eq:ball-in-omega}.
\end{proof}

\begin{lemma}[Cone condition implies a Lipschitz graph]\label{lem:cone-to-graph}
Let $v\in S^{n-1}$, $\kappa\in(0,1]$, $h>0$, and let $S\subset\R^n$ be a set with $\diam S<h$ such that
\[
K(\xi,v,\kappa,h)\cap S=\emptyset\qquad\text{for every }\xi\in S .
\]
Then $S$ is contained in the graph of an $L$-Lipschitz function defined on a subset of the hyperplane $v^\perp$, in the coordinates determined by $v$, with
\[
L=L(\kappa):=\frac{1-\kappa^2/2}{\sqrt{1-(1-\kappa^2/2)^2}} .
\]
In particular $\mathcal H^{n-1}(S)\le (1+L^2)^{(n-1)/2}\,\omega_{n-1}\,(\diam S)^{n-1}$, where $\omega_{n-1}$ is the volume of the unit ball of $\R^{n-1}$, and $\mathcal H^\lambda(S)=0$ for every $\lambda>n-1$.
\end{lemma}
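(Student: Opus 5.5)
Write points of $\R^n$ as $y=y'+y_nv$ with $y'\in v^\perp$ and $y_n=\langle y,v\rangle$. The plan is to show that for any two distinct points $\xi,\eta\in S$,
\[
\abs{\eta_n-\xi_n}\le L\,\abs{\eta'-\xi'} .
\]
This inequality first gives injectivity of the projection $P:S\to v^\perp$, $P\xi=\xi'$. Indeed, if $\xi'=\eta'$ then $\eta_n=\xi_n$, so $\eta=\xi$. The function $f(\xi'):=\xi_n$ is then well defined on $P(S)$, it is $L$-Lipschitz, and $S$ is contained in its graph.

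To prove the inequality, fix $\xi\ne\eta$ in $S$ and put $u:=(\eta-\xi)/\abs{\eta-\xi}$ and $r:=\abs{\eta-\xi}$. Since $r\le\diam S<h$, the hypothesis $\eta\notin K(\xi,v,\kappa,h)$ forces $\abs{u-v}\ge\kappa$. Exchanging the roles of $\xi$ and $\eta$ gives $\abs{-u-v}\ge\kappa$ as well. For unit vectors, $\abs{u\mp v}^2=2\mp2\langle u,v\rangle$, so the two conditions together give
\[
\abs{\langle u,v\rangle}\le 1-\kappa^2/2=:c .
\]
Since $\kappa\in(0,1]$, we have $c\in[1/2,1)$. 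Now $\abs{\eta_n-\xi_n}=r\abs{\langle u,v\rangle}\le cr$ and $\abs{\eta'-\xi'}=r\sqrt{1-\langle u,v\rangle^2}\ge r\sqrt{1-c^2}$. Dividing yields the inequality with $L=c/\sqrt{1-c^2}$, which is exactly $L(\kappa)$. The only point needing care is the use of both orderings of the pair, so that the cone condition bounds $\langle u,v\rangle$ from both sides; the diameter bound $\diam S<h$ is what makes $\eta$ lie within the height of the cone at $\xi$.

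For the measure estimate, the graph map $F(y')=y'+f(y')v$ is $\sqrt{1+L^2}$-Lipschitz on $P(S)$, and $S=F(P(S))$. Hence
\[
\mathcal H^{n-1}(S)\le(1+L^2)^{(n-1)/2}\,\mathcal H^{n-1}(P(S)).
\]
The set $P(S)\subset v^\perp\cong\R^{n-1}$ has diameter at most $\diam S$, because $P$ is $1$-Lipschitz. The isodiametric inequality then bounds its Lebesgue measure by $\omega_{n-1}(\diam S/2)^{n-1}$, which is at most $\omega_{n-1}(\diam S)^{n-1}$. This uses the normalisation in which $\mathcal H^{n-1}$ agrees with Lebesgue measure on $\R^{n-1}$; with the unnormalised convention the constant only changes by a dimensional factor.

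Finally, $\mathcal H^{n-1}(S)<\infty$ implies $\mathcal H^\lambda(S)=0$ for every $\lambda>n-1$, by the standard comparison of Hausdorff measures.
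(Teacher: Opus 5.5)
Your proof is correct and follows the paper's argument essentially step for step. Both use the two-sided cone test (applying the hypothesis to $(\xi,\eta)$ and to $(\eta,\xi)$) to get $\abs{\langle u,v\rangle}\le 1-\kappa^2/2$, deduce that the projection onto $v^\perp$ is injective on $S$ with an $L(\kappa)$-Lipschitz height function, and push the measure bound through the $\sqrt{1+L^2}$-Lipschitz graph map. Your appeal to the isodiametric inequality only makes explicit the base-measure estimate that the paper leaves implicit; containment of $P(S)$ in a ball of radius $\diam S$ would also suffice.
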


\begin{proof}
Let $\xi\ne\xi'$ be points of $S$ and put $u:=(\xi'-\xi)/\abs{\xi'-\xi}$. Since $\abs{\xi'-\xi}\le\diam S<h$, the point $\xi'$ would lie in $K(\xi,v,\kappa,h)$ if $\abs{u-v}<\kappa$; hence $\abs{u-v}\ge\kappa$. Exchanging the roles of $\xi$ and $\xi'$ and noting that $(\xi-\xi')/\abs{\xi-\xi'}=-u$, we get $\abs{u+v}\ge\kappa$ as well. Since $\abs{u\pm v}^2=2\pm2\,u\cdot v$, these two inequalities read
\[
\abs{u\cdot v}\le 1-\tfrac{\kappa^2}{2} .
\]
Writing $\xi'-\xi=a v+w$ with $w\perp v$, so that $a=\abs{\xi'-\xi}\,(u\cdot v)$ and $\abs{w}=\abs{\xi'-\xi}\sqrt{1-(u\cdot v)^2}$, we obtain
\[
\abs{a}\le L(\kappa)\,\abs{w} .
\]
Thus the orthogonal projection $P:\R^n\to v^\perp$, $P\xi:=\xi-(\xi\cdot v)v$, is injective on $S$, and the function $P\xi\mapsto\xi\cdot v$ is $L(\kappa)$-Lipschitz on $P(S)$; that is, $S$ is the graph of an $L(\kappa)$-Lipschitz function over $P(S)\subset v^\perp$. The measure bound follows since an $L$-Lipschitz graph over a set of diameter at most $\diam S$ has $\mathcal H^{n-1}$ measure at most $(1+L^2)^{(n-1)/2}$ times the $\mathcal H^{n-1}$ measure of its base, and $\mathcal H^\lambda$ vanishes on $\mathcal H^{n-1}$-finite sets when $\lambda>n-1$.
\end{proof}

\begin{theorem}[Rectifiability of the straight visible boundary]\label{thm:rectifiable}
Let $\Omega\subset\R^n$ be a domain, $x\in\Omega$, $\alpha\in(0,1]$ and $C_0\ge1$. Then the set
\[
S_x:=\partial^{\mathrm{str}}_{x,\alpha}\Omega\cap B\bigl(x,C_0d_\Omega(x)\bigr)
\]
is contained in the union of at most $N=N(\alpha,C_0,n)$ Lipschitz graphs, each with Lipschitz constant at most $L(\alpha/4)$ as in \Cref{lem:cone-to-graph}. Consequently $S_x$ is $(n-1)$-rectifiable,
\[
\mathcal H^{n-1}(S_x)\le C(\alpha,C_0,n)\,d_\Omega(x)^{n-1},
\]
and
\[
\mathcal H^\lambda_\infty(S_x)=0\qquad\text{for every }\lambda>n-1 .
\]
\end{theorem}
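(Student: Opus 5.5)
The plan is to cut $S_x$ into boundedly many pieces, each of which satisfies the hypotheses of \Cref{lem:cone-to-graph} with a fixed axis $v$, aperture $\kappa=\alpha/4$ and height $h=d_\Omega(x)/2$. Two things drive this. \Cref{lem:cone} places an open cone of aperture $\alpha/2$ inside $\Omega$ at every point of $S_x$, and such a cone cannot meet $\partial\Omega\supset S_x$. The cone's height is also bounded below uniformly: every $\xi\in\partial\Omega$ satisfies $\rho_\xi:=\abs{x-\xi}\ge d_\Omega(x)$. So for $\xi\in S_x$, with $v_\xi:=(x-\xi)/\rho_\xi$,
\[
K\bigl(\xi,v_\xi,\tfrac{\alpha}{2},\tfrac{d_\Omega(x)}{2}\bigr)\subset K\bigl(\xi,v_\xi,\tfrac{\alpha}{2},\tfrac{\rho_\xi}{2}\bigr)\subset\Omega ,
\qquad\text{hence}\qquad
K\bigl(\xi,v_\xi,\tfrac{\alpha}{2},\tfrac{d_\Omega(x)}{2}\bigr)\cap S_x=\emptyset .
\]

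\textbf{Freezing the axis.} Cover $S^{n-1}$ by $N_1=N_1(\alpha,n)$ open caps $\{u:\abs{u-v_j}<\alpha/4\}$. Let $A_j$ be the set of $\xi\in S_x$ with $\abs{v_\xi-v_j}<\alpha/4$. For $\xi\in A_j$, any $u$ with $\abs{u-v_j}<\alpha/4$ satisfies $\abs{u-v_\xi}<\alpha/2$ by the triangle inequality. Therefore
\[
K\bigl(\xi,v_j,\tfrac{\alpha}{4},h\bigr)\subset K\bigl(\xi,v_\xi,\tfrac{\alpha}{2},h\bigr),\qquad h:=\tfrac{d_\Omega(x)}{2},
\]
and this cone still misses $S_x$, in particular $A_j$.

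\textbf{Shrinking the diameter.} Cover $B(x,C_0d_\Omega(x))$ by $N_2=N_2(C_0,n)$ cubes of side $d_\Omega(x)/(4\sqrt n)$, each of diameter $d_\Omega(x)/4<h$. Let $S_{j,k}:=A_j\cap Q_k$. Each $S_{j,k}$ satisfies the hypotheses of \Cref{lem:cone-to-graph} with $v=v_j$, $\kappa=\alpha/4$ and $h=d_\Omega(x)/2$. Hence it lies in an $L(\alpha/4)$-Lipschitz graph over a subset of $v_j^\perp$, and
\[
\mathcal H^{n-1}(S_{j,k})\le (1+L^2)^{(n-1)/2}\omega_{n-1}\bigl(d_\Omega(x)/4\bigr)^{n-1}.
\]

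\textbf{Conclusion.} Summing over the $N=N_1N_2$ pieces gives the containment in $N(\alpha,C_0,n)$ Lipschitz graphs. It also gives the bound $\mathcal H^{n-1}(S_x)\le C(\alpha,C_0,n)\,d_\Omega(x)^{n-1}$. Rectifiability follows, since a Lipschitz function on a subset of $v_j^\perp$ extends, by Kirszbraun or componentwise McShane, to all of $\R^{n-1}$. Finally, $\mathcal H^{n-1}(S_x)<\infty$ forces $\mathcal H^\lambda(S_x)=0$ for $\lambda>n-1$, and therefore $\mathcal H^\lambda_\infty(S_x)=0$ by the equivalence recalled in \Cref{sec:notation}.

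I do not expect a serious obstacle. The only point needing care is the strict inequalities: the inclusion of the $\alpha/4$ cone about $v_j$ into the $\alpha/2$ cone about $v_\xi$, and the requirement $\diam S_{j,k}<h$ rather than $\le h$. That is why the cubes are taken of diameter $d_\Omega(x)/4$, and why the lower bound $\rho_\xi\ge d_\Omega(x)$ is used to make the height uniform over $S_x$.
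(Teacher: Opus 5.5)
Your proposal is correct and follows the paper's proof essentially step for step. It uses the uniform-height cones from \Cref{lem:cone} via $\rho_\xi\ge d_\Omega(x)$, freezes the axis by splitting directions at scale $\alpha/4$, localizes to pieces of diameter $d_\Omega(x)/4<h$, and applies \Cref{lem:cone-to-graph}. The only differences (caps centred at $v_j$ instead of sets of diameter $<\alpha/4$, cubes instead of balls of radius $d/8$, and the explicit McShane extension to justify rectifiability) are cosmetic.
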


\begin{proof}
Write $d:=d_\Omega(x)$ and, for $\xi\in S_x$, let $v_\xi:=(x-\xi)/\abs{x-\xi}$ and $\rho_\xi:=\abs{x-\xi}$. Since $\xi\in\partial\Omega$ we have $\rho_\xi\ge d$, and since $\xi\in B(x,C_0d)$ we have $\rho_\xi<C_0 d$. By \Cref{lem:cone},
\begin{equation}\label{eq:cone-uniform}
K\bigl(\xi,v_\xi,\tfrac\alpha2,\tfrac d2\bigr)\subset K\bigl(\xi,v_\xi,\tfrac\alpha2,\tfrac{\rho_\xi}2\bigr)\subset\Omega
\qquad\text{for every }\xi\in S_x ,
\end{equation}
so all the cones may be truncated at the common height $h:=d/2$.

Cover $S^{n-1}$ by finitely many sets $\Sigma_1,\dots,\Sigma_{N_1}$ of diameter less than $\alpha/4$, with $N_1=N_1(\alpha,n)$, and choose $v_j\in\Sigma_j$. Cover $B(x,C_0d)$ by finitely many balls $D_1,\dots,D_{N_2}$ of radius $d/8$, with $N_2=N_2(C_0,n)$. For $1\le j\le N_1$ and $1\le i\le N_2$ put
\[
S_{j,i}:=\{\xi\in S_x\cap D_i:\ v_\xi\in\Sigma_j\},
\]
so that $S_x=\bigcup_{j,i}S_{j,i}$ and $N:=N_1N_2$ depends only on $\alpha$, $C_0$ and $n$.

Fix $j,i$ and let $\xi\in S_{j,i}$. If $u\in S^{n-1}$ satisfies $\abs{u-v_j}<\alpha/4$, then
\[
\abs{u-v_\xi}\le\abs{u-v_j}+\abs{v_j-v_\xi}<\tfrac\alpha4+\tfrac\alpha4=\tfrac\alpha2 ,
\]
because $v_\xi,v_j\in\Sigma_j$ and $\diam\Sigma_j<\alpha/4$. Hence
\[
K\bigl(\xi,v_j,\tfrac\alpha4,h\bigr)\subset K\bigl(\xi,v_\xi,\tfrac\alpha2,h\bigr)\subset\Omega
\]
by \eqref{eq:cone-uniform}. Since $\Omega$ is open and $S_{j,i}\subset\partial\Omega$, these cones contain no point of $S_{j,i}$:
\[
K\bigl(\xi,v_j,\tfrac\alpha4,h\bigr)\cap S_{j,i}=\emptyset\qquad\text{for every }\xi\in S_{j,i}.
\]
Moreover $\diam S_{j,i}\le\diam D_i=d/4<h$. \Cref{lem:cone-to-graph}, applied with $v=v_j$ and $\kappa=\alpha/4$, shows that $S_{j,i}$ is contained in a Lipschitz graph with constant $L(\alpha/4)$, and that
\[
\mathcal H^{n-1}(S_{j,i})\le\bigl(1+L(\alpha/4)^2\bigr)^{(n-1)/2}\omega_{n-1}\,(d/4)^{n-1}.
\]
Summing over the $N$ pairs $(j,i)$ gives the stated bound for $\mathcal H^{n-1}(S_x)$, and $(n-1)$-rectifiability. Finally, a finite union of Lipschitz graphs has $\sigma$-finite $\mathcal H^{n-1}$ measure, so $\mathcal H^\lambda(S_x)=0$ for $\lambda>n-1$, and therefore $\mathcal H^\lambda_\infty(S_x)=0$.
\end{proof}

\begin{remark}\label{rem:no-cone-for-dir}
Only the thickness condition is used, and it is used only near the endpoint: what drives \Cref{thm:rectifiable} is the behaviour of $t\mapsto d_\Omega((1-t)x+t\xi)$ as $t\uparrow1$. The set $\partial_x^{\mathrm{dir}}\Omega$ carries no cone, and \Cref{thm:rectifiable} says nothing about it; see \Cref{sec:raw}.
\end{remark}

\section{The scope of segment-based criteria}\label{sec:scope}

We first record that the radius $C_0$ in a density condition is immaterial.

\begin{lemma}[Radius normalisation]\label{lem:radius}
Let $\Omega\subsetneq\R^n$ be a domain, $\lambda\ge0$, $c_0>0$ and $C_0\ge2$, and suppose that
\[
\mathcal H^\lambda_\infty\bigl(\partial\Omega\cap B(x,C_0d_\Omega(x))\bigr)\ge c_0\,d_\Omega(x)^\lambda
\qquad\text{for every }x\in\Omega .
\]
Then
\[
\mathcal H^\lambda_\infty\bigl(\partial\Omega\cap B(x,2d_\Omega(x))\bigr)\ge c_0C_0^{-\lambda}\,d_\Omega(x)^\lambda
\qquad\text{for every }x\in\Omega .
\]
\end{lemma}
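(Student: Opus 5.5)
We plan to transfer the hypothesis from a well-chosen auxiliary point $y\in\Omega$ closer to the boundary, at a scale $d_\Omega(y)$ comparable to $d_\Omega(x)/C_0$, chosen so that the large ball $B(y,C_0d_\Omega(y))$ fits inside $B(x,2d_\Omega(x))$. The conclusion then follows from monotonicity of $\mathcal H^\lambda_\infty$ under inclusion.

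Fix $x\in\Omega$ and write $d:=d_\Omega(x)$.

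\emph{Choice of $y$.} Since $\Omega\subsetneq\R^n$, $\partial\Omega$ is nonempty and closed, so there is $\xi\in\partial\Omega$ with $\abs{x-\xi}=d$. Put $s:=d/C_0\le d/2$ and let $y$ be the point of the segment $[x,\xi]$ with $\abs{y-\xi}=s$, so that $\abs{x-y}=d-s$.

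\emph{Computing $d_\Omega(y)$.} We claim $d_\Omega(y)=s$.
\begin{itemize}
\item $B(y,s)\subset B(x,d)\subset\Omega$, since any $z$ with $\abs{z-y}<s$ satisfies $\abs{z-x}<d-s+s=d$. Hence $y\in\Omega$ and $d_\Omega(y)\ge s$.
\item $\xi\in\partial\Omega$ lies at distance $s$ from $y$, so $d_\Omega(y)\le s$.
\end{itemize}

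\emph{Ball inclusion.} For $z\in B(y,C_0s)$ we have
\[
\abs{z-x}<\abs{x-y}+C_0s=(d-s)+d=2d-s<2d .
\]
Hence $B(y,C_0d_\Omega(y))\subset B(x,2d)$.

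\emph{Conclusion.} Applying the hypothesis at the point $y$ and using monotonicity of the content gives
\[
\mathcal H^\lambda_\infty\bigl(\partial\Omega\cap B(x,2d)\bigr)\ge\mathcal H^\lambda_\infty\bigl(\partial\Omega\cap B(y,C_0d_\Omega(y))\bigr)\ge c_0s^\lambda=c_0C_0^{-\lambda}d^\lambda .
\]

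There is no real obstacle. The only points needing care are the exact identification $d_\Omega(y)=s$, which uses that $y$ lies on a radius of the inscribed ball $B(x,d)$, and the use of $C_0\ge2$ to guarantee $s<d$, so that $y\neq\xi$ and $y\in\Omega$.
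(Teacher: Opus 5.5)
Your proposal is correct and is essentially the paper's proof. The paper likewise takes a nearest boundary point $\xi^*$, places $y$ on $[x,\xi^*]$ with $\abs{y-\xi^*}=d/C_0$, and uses $d_\Omega(y)=d/C_0$ and $B(y,C_0d_\Omega(y))\subset B(x,2d)$ to conclude by monotonicity of the content. Your explicit check that $B(y,s)\subset B(x,d)\subset\Omega$, which gives $y\in\Omega$, is a slightly more careful version of the paper's step $d_\Omega(y)\ge d-\abs{x-y}$.
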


\begin{proof}
Fix $x\in\Omega$ and write $d:=d_\Omega(x)$. Since $\partial\Omega$ is closed and nonempty, there is $\xi^*\in\partial\Omega$ with $\abs{x-\xi^*}=d$. Let
\[
y:=x+\Bigl(1-\tfrac1{C_0}\Bigr)(\xi^*-x),
\]
so that $\abs{y-\xi^*}=d/C_0$ and $\abs{x-y}=d(1-1/C_0)$. Then $d_\Omega(y)\le\abs{y-\xi^*}=d/C_0$, while $d_\Omega(y)\ge d-\abs{x-y}=d/C_0$; hence $d_\Omega(y)=d/C_0$. Applying the hypothesis at $y$,
\[
\mathcal H^\lambda_\infty\bigl(\partial\Omega\cap B(y,C_0d_\Omega(y))\bigr)\ge c_0 d_\Omega(y)^\lambda=c_0C_0^{-\lambda}d^\lambda .
\]
Finally $C_0d_\Omega(y)=d$ and $\abs{x-y}<d$, so $B(y,C_0d_\Omega(y))\subset B(x,2d)$, and the conclusion follows by monotonicity of the content.
\end{proof}

We can now state the dichotomy. By a \emph{segment-based criterion} we mean the hypothesis
\begin{equation}\label{eq:str-hypothesis}
\mathcal H^\lambda_\infty\bigl(\partial^{\mathrm{str}}_{x,\alpha}\Omega\cap B(x,C_0d_\Omega(x))\bigr)\ge c_0\,d_\Omega(x)^\lambda
\qquad\text{for every }x\in\Omega ,
\end{equation}
with constants $c_0>0$, $C_0\ge2$, $\alpha\in(0,1]$ and an exponent $0\le\lambda\le n$.

\begin{theorem}[Unsatisfiable or redundant]\label{thm:dichotomy}
Let $\Omega\subsetneq\R^n$ be a domain and let $1<p<\infty$.
\begin{enumerate}[label=(\roman*)]
\item If $\lambda>n-1$, then no domain satisfies \eqref{eq:str-hypothesis}, for any choice of $c_0>0$, $C_0\ge2$ and $\alpha\in(0,1]$.
\item If $0\le\lambda\le n-1$ and $\Omega$ satisfies \eqref{eq:str-hypothesis}, then $\Omega$ satisfies the density condition of \Cref{thm:density-criterion} with the same exponent $\lambda$ and constant $c_0C_0^{-\lambda}$, and consequently admits the $(p,\beta)$-Hardy inequality for all $\beta<p-n+\lambda$.
\end{enumerate}
In particular, a segment-based criterion never yields a $(p,\beta)$-Hardy inequality with $\beta\ge p-1$, and in the range $\beta<p-1$ where it applies it yields nothing beyond \Cref{thm:density-criterion}, whose hypothesis involves no visibility whatsoever.
\end{theorem}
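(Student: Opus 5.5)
Both parts follow almost directly from results already established, so the plan is to treat (i) and (ii) separately and then read off the concluding sentence.

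For (i), fix $\lambda>n-1$, constants $c_0>0$, $C_0\ge2$, $\alpha\in(0,1]$, and suppose some domain $\Omega\subsetneq\R^n$ satisfies \eqref{eq:str-hypothesis}. Pick any $x\in\Omega$. Since $\Omega\ne\R^n$ and $\Omega$ is connected and open, $\partial\Omega\ne\emptyset$, so $d_\Omega(x)\in(0,\infty)$ and the right-hand side $c_0 d_\Omega(x)^\lambda$ is strictly positive. On the other hand, \Cref{thm:rectifiable}, applied with this $x$, $\alpha$ and $C_0\ge1$, gives $\mathcal H^\lambda_\infty\bigl(\partial^{\mathrm{str}}_{x,\alpha}\Omega\cap B(x,C_0d_\Omega(x))\bigr)=0$. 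This is a contradiction, which proves (i). The only point to check is that \Cref{thm:rectifiable} holds for every $x$ with no extra assumption; it does.

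For (ii), let $0\le\lambda\le n-1$. Since $\partial^{\mathrm{str}}_{x,\alpha}\Omega\subset\partial\Omega$, monotonicity of $\mathcal H^\lambda_\infty$ turns \eqref{eq:str-hypothesis} into
\[
\mathcal H^\lambda_\infty\bigl(\partial\Omega\cap B(x,C_0d_\Omega(x))\bigr)\ge c_0\,d_\Omega(x)^\lambda
\]
for every $x\in\Omega$. \Cref{lem:radius}, which needs exactly $C_0\ge2$, then lowers the radius to $2d_\Omega(x)$ at the cost of the constant $c_0C_0^{-\lambda}$. The resulting inequality is the hypothesis of \Cref{thm:density-criterion} with this $\lambda\le n-1$, and that theorem gives the $(p,\beta)$-Hardy inequality for all $\beta<p-n+\lambda$.

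For the final sentence, a segment-based criterion can only be satisfied when $\lambda\le n-1$, by (i). In that case it yields only $\beta<p-n+\lambda\le p-1$, so it never reaches $\beta\ge p-1$. Within that range, (ii) shows its conclusion is already implied by the visibility-free density condition.

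There is no real obstacle here; the substance lies in \Cref{thm:rectifiable}. The one step to state with care is the nonemptiness of $\partial\Omega$, which ensures $d_\Omega(x)$ is finite and positive so that the lower bound in (i) is genuinely positive.
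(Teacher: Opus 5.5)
Your proposal is correct and matches the paper's proof step for step. Part (i) sets $c_0d_\Omega(x)^\lambda>0$ against the vanishing content from \Cref{thm:rectifiable}, part (ii) combines monotonicity of $\mathcal H^\lambda_\infty$ with \Cref{lem:radius} and \Cref{thm:density-criterion}, and the final sentence follows from $p-n+\lambda\le p-1$. One minor point: $\partial\Omega\ne\emptyset$ follows from the connectedness of $\R^n$, not of $\Omega$, since a nonempty open proper subset of $\R^n$ cannot also be closed.
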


\begin{proof}
(i) Let $x\in\Omega$; then $d_\Omega(x)>0$, while $\mathcal H^\lambda_\infty(\partial^{\mathrm{str}}_{x,\alpha}\Omega\cap B(x,C_0d_\Omega(x)))=0$ by \Cref{thm:rectifiable}. Hence \eqref{eq:str-hypothesis} fails at every point of $\Omega$.

(ii) Since $\partial^{\mathrm{str}}_{x,\alpha}\Omega\subset\partial\Omega$, the hypothesis \eqref{eq:str-hypothesis} implies
\[
\mathcal H^\lambda_\infty\bigl(\partial\Omega\cap B(x,C_0d_\Omega(x))\bigr)\ge c_0 d_\Omega(x)^\lambda
\qquad\text{for every }x\in\Omega,
\]
and \Cref{lem:radius} converts this into the hypothesis of \Cref{thm:density-criterion} with constant $c_0C_0^{-\lambda}$. The last assertion follows since $\lambda\le n-1$ forces $p-n+\lambda\le p-1$.
\end{proof}

\begin{remark}\label{rem:reading}
\Cref{thm:dichotomy} should be read together with \Cref{rem:where-visibility}. Visibility hypotheses are useful precisely when they hold with $\lambda>n-1$, and that is exactly what a segment, subject to a uniform thickness condition, can never deliver: by \Cref{lem:cone} the thickness condition manufactures an interior cone at every visible boundary point, and cones force $(n-1)$-rectifiability. The obstruction is therefore geometric and not an artefact of the passage through \Cref{thm:visual-criterion}: it is intrinsic to the set $\partial^{\mathrm{str}}_{x,\alpha}\Omega$ itself.
\end{remark}

\subsection{The exponent \texorpdfstring{$n-1$}{n-1} is attained}\label{sec:sharp}

\Cref{thm:rectifiable} would be vacuous if $\partial^{\mathrm{str}}_{x,\alpha}\Omega$ were always small. It is not: the two standard model classes realise $\lambda=n-1$, so that the bound of \Cref{thm:rectifiable} is attained and the dichotomy of \Cref{thm:dichotomy} is sharp.

\begin{proposition}[Convex domains]\label{prop:convex-straight}
Let $\Omega\subset\R^n$ be a bounded convex domain and let $C_0\ge1$. Then, with $\alpha:=1/C_0$,
\[
\partial\Omega\cap B(x,C_0 d_\Omega(x))\subset\partial^{\mathrm{str}}_{x,\alpha}\Omega
\qquad\text{for every }x\in\Omega .
\]
\end{proposition}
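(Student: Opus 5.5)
Fix $x\in\Omega$, write $d:=d_\Omega(x)$, and let $\xi\in\partial\Omega\cap B(x,C_0d)$, so that $\rho:=\abs{x-\xi}$ satisfies $d\le\rho<C_0d$. Two things must be checked: that $[x,\xi)\subset\Omega$, and the thickness inequality of \Cref{def:visible} with $\alpha=1/C_0$. The whole argument rests on one fact about convex sets. If $B(x,d)\subset\Omega$ and $\xi\in\overline\Omega$, then the convex hull of $B(x,d)\cup\{\xi\}$, minus the vertex $\xi$, lies in $\Omega$. To see this, note that for $t\in[0,1)$ the ball $B((1-t)x+t\xi,(1-t)d)$ equals $(1-t)B(x,d)+t\xi$. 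Pick $\xi_k\in\Omega$ with $\xi_k\to\xi$. Then $(1-t)B(x,d)+t\xi_k\subset\Omega$ by convexity. Any point $z$ of the open ball $B((1-t)x+t\xi,(1-t)d)$ lies in $(1-t)B(x,d)+t\xi_k$ once $k$ is large, because the ball is open and the translates converge. Hence
\[
B\bigl((1-t)x+t\xi,\,(1-t)d\bigr)\subset\Omega\qquad\text{for every }t\in[0,1).
\]
This gives $[x,\xi)\subset\Omega$ directly, and also $d_\Omega((1-t)x+t\xi)\ge(1-t)d$.

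For the thickness condition, use $d>\rho/C_0$ to get
\[
d_\Omega\bigl((1-t)x+t\xi\bigr)\ge(1-t)d\ge(1-t)\rho/C_0\ge\alpha\min\{t,1-t\}\rho .
\]
This is exactly the defining inequality of $\partial^{\mathrm{str}}_{x,\alpha}\Omega$ for every $t\in(0,1)$. Since we only need $\min\{t,1-t\}\le1-t$, no case split is necessary; the estimate is even stronger than required for $t<1/2$.

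The only delicate step is the cone-of-balls inclusion when $\xi\in\partial\Omega$ rather than in $\Omega$. It is handled by the approximation above, or by the standard fact that $(1-t)\,\mathrm{int}\,\Omega+t\,\overline\Omega\subset\mathrm{int}\,\Omega$ for convex $\Omega$ and $t<1$. Boundedness of $\Omega$ is not really used; it only guarantees $\partial\Omega\neq\emptyset$ and $d<\infty$.
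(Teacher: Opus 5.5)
Your proof is correct and follows the paper's argument: contracting $B(x,d_\Omega(x))$ towards $\xi$ shows that the balls $B((1-t)x+t\xi,(1-t)d_\Omega(x))$ lie in $\Omega$, and this gives both $[x,\xi)\subset\Omega$ and the thickness bound via $\abs{x-\xi}<C_0d_\Omega(x)$. Your approximation by points $\xi_k\in\Omega$ supplies a justification that the paper leaves implicit, since it simply invokes convexity even though $\xi\notin\Omega$, and your observation $\min\{t,1-t\}\le 1-t$ replaces the paper's unnecessary case split.
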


\begin{proof}
Fix $x\in\Omega$ and set $r:=d_\Omega(x)$, so that $B(x,r)\subset\Omega$ by convexity. Let $\xi\in\partial\Omega\cap B(x,C_0r)$ and put $z_t:=(1-t)x+t\xi$ for $t\in(0,1)$; by convexity $[x,\xi)\subset\Omega$. Every point of $B(z_t,(1-t)r)$ can be written as $(1-t)y+t\xi$ with $y\in B(x,r)$, and convexity gives $(1-t)y+t\xi\in\Omega$ for $t<1$; hence
\[
d_\Omega(z_t)\ge(1-t)r\ge\frac{1}{C_0}(1-t)\abs{x-\xi},
\]
using $\abs{x-\xi}\le C_0r$. If $0<t\le\frac12$ then $1-t\ge t$, so $d_\Omega(z_t)\ge\frac1{C_0}t\abs{x-\xi}$; if $\frac12\le t<1$ the displayed bound is the required one. In both cases
\[
d_\Omega(z_t)\ge\frac1{C_0}\min\{t,1-t\}\abs{x-\xi},
\]
that is, $\xi\in\partial^{\mathrm{str}}_{x,1/C_0}\Omega$.
\end{proof}

\begin{corollary}[Balls]\label{cor:balls}
If $\Omega=B(x_0,R)$ then for every $x\in\Omega$ and $C_0\ge1$,
\[
\mathcal H^{n-1}_\infty\bigl(\partial^{\mathrm{str}}_{x,1/C_0}\Omega\cap B(x,C_0d_\Omega(x))\bigr)\asymp d_\Omega(x)^{n-1},
\]
so that \eqref{eq:str-hypothesis} holds with $\lambda=n-1$, and the upper bound of \Cref{thm:rectifiable} is attained.
\end{corollary}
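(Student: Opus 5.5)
The upper bound $\lesssim d_\Omega(x)^{n-1}$ comes straight from \Cref{thm:rectifiable}, since $\mathcal H^{n-1}_\infty\le\mathcal H^{n-1}$ up to a normalising constant. The real work is the lower bound. Fix $x\in\Omega=B(x_0,R)$ and write $d:=d_\Omega(x)=R-\abs{x-x_0}$. By \Cref{prop:convex-straight} with $\alpha=1/C_0$ we have
\[
\partial\Omega\cap B(x,C_0d)\subset\partial^{\mathrm{str}}_{x,1/C_0}\Omega .
\]
It therefore suffices to show that
\[
\mathcal H^{n-1}_\infty\bigl(\partial B(x_0,R)\cap B(x,C_0d)\bigr)\gtrsim_n d^{n-1}.
\]
For $C_0>1$ this intersection contains a spherical cap of geodesic size comparable to $d$. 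The case $C_0=1$ is degenerate: then $\partial\Omega\cap B(x,d)=\emptyset$ because the ball is open, and the left side vanishes. I would record this and read the corollary for $C_0>1$, or $C_0\ge2$ as in \eqref{eq:str-hypothesis}, where the implicit constant may depend on $C_0$.

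For the cap I would use the nearest boundary point $\xi^*:=x_0+R(x-x_0)/\abs{x-x_0}$, which satisfies $\abs{x-\xi^*}=d$; if $x=x_0$, take any point of the sphere. Put $\delta:=\min\{(C_0-1)d,\,R\}/2$. Every $\xi\in\partial B(x_0,R)\cap B(\xi^*,\delta)$ then satisfies $\abs{x-\xi}<d+\delta<C_0d$. So the cap $A:=\partial B(x_0,R)\cap B(\xi^*,\delta)$ lies in the set, and since $d\le R$ we have $\delta\gtrsim_{C_0}d$.

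It remains to show $\mathcal H^{n-1}_\infty(A)\gtrsim\delta^{n-1}$. I would take the orthogonal projection $P$ onto the tangent hyperplane at $\xi^*$. Because $\delta\le R/2$, the image $P(A)$ contains an $(n-1)$-disc of radius $c\delta$. The map $P$ is $1$-Lipschitz, so any cover of $A$ by balls $B(x_i,r_i)$ projects to a cover of this disc by $(n-1)$-balls of the same radii. The volume comparison
\[
\sum_i\omega_{n-1}r_i^{n-1}\ge\omega_{n-1}(c\delta)^{n-1}
\]
then gives $\sum_i r_i^{n-1}\ge(c\delta)^{n-1}$. Combining the steps,
\[
\mathcal H^{n-1}_\infty\ge c(n,C_0)\,d^{n-1}.
\]
This is exactly \eqref{eq:str-hypothesis} with $\lambda=n-1$, $\alpha=1/C_0$ and $c_0=c(n,C_0)$, and together with the first step it shows that the upper bound of \Cref{thm:rectifiable} is attained.

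The only delicate step is keeping the cap inside $B(x,C_0d)$ uniformly, including when $d$ is comparable to $R$ (the point $x$ near the centre). The choice $\delta\le R/2$ handles this, because the cap stays a genuine graph over its tangent disc while remaining of size $\asymp d$.
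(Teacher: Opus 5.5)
Your proposal is correct and follows the paper's route: \Cref{prop:convex-straight} puts the boundary cap inside $\partial^{\mathrm{str}}_{x,1/C_0}\Omega$, a spherical-cap lower bound gives the content estimate (you make it explicit via the $1$-Lipschitz projection onto the tangent hyperplane, where the paper only asserts it), and \Cref{thm:rectifiable} supplies the upper bound. Your remark about $C_0=1$ is also right: for the open ball $\partial\Omega\cap B(x,d_\Omega(x))=\emptyset$, so the statement as written fails there and must be read for $C_0>1$ (in particular $C_0\ge2$ as in \eqref{eq:str-hypothesis}), with constants depending on $C_0$.
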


\begin{proof}
Balls are convex, so \Cref{prop:convex-straight} gives $\partial\Omega\cap B(x,C_0d_\Omega(x))\subset\partial^{\mathrm{str}}_{x,1/C_0}\Omega$. The set $\partial\Omega\cap B(x,C_0d_\Omega(x))$ is a spherical cap of radius comparable to $d_\Omega(x)$, whence the content is comparable to $d_\Omega(x)^{n-1}$; the matching upper bound is \Cref{thm:rectifiable}.
\end{proof}

\begin{proposition}[Lipschitz graph domains]\label{prop:lipschitz-straight}
Let $\varphi:\R^{n-1}\to\R$ be $L$-Lipschitz and let $\Omega_\varphi:=\{(y',y_n):y_n>\varphi(y')\}$. Fix $x=(x',x_n)\in\Omega_\varphi$ and set $r:=x_n-\varphi(x')>0$. Then
\[
\frac{r}{1+L}\le d_{\Omega_\varphi}(x)\le r,
\]
and, with $\eta:=\alpha:=\frac1{4(1+L)}$ and
\[
E_x:=\bigl\{(\xi',\varphi(\xi'))\in\partial\Omega_\varphi:\ \abs{\xi'-x'}\le\eta r\bigr\},
\]
one has $E_x\subset\partial^{\mathrm{str}}_{x,\alpha}\Omega_\varphi$ and $\mathcal H^{n-1}_\infty(E_x)\gtrsim_{n,L}d_{\Omega_\varphi}(x)^{n-1}$. Thus Lipschitz graph domains satisfy \eqref{eq:str-hypothesis} locally with $\lambda=n-1$.
\end{proposition}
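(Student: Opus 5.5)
The plan is a direct computation in graph coordinates, built on one comparison: for $y=(y',y_n)\in\Omega_\varphi$,
\[
\frac{y_n-\varphi(y')}{1+L}\le d_{\Omega_\varphi}(y)\le y_n-\varphi(y').
\]
For the upper bound, the boundary point $(y',\varphi(y'))$ lies at vertical distance $y_n-\varphi(y')$ from $y$. For the lower bound, take any boundary point $(z',\varphi(z'))$. If $\abs{z'-y'}\ge (y_n-\varphi(y'))/(1+L)$ there is nothing to prove. Otherwise the vertical gap satisfies $y_n-\varphi(z')\ge (y_n-\varphi(y'))-L\abs{z'-y'}$, which is again at least $(y_n-\varphi(y'))/(1+L)$. (One can sharpen this to $1/\sqrt{1+L^2}$, but $1/(1+L)$ suffices.) Applied at $y=x$, this gives the first assertion, $r/(1+L)\le d_{\Omega_\varphi}(x)\le r$.

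Next, fix $\xi=(\xi',\varphi(\xi'))\in E_x$, so that $\abs{\xi'-x'}\le\eta r$. Put $z_t:=(1-t)x+t\xi$ and let $h(t)$ be the height of $z_t$ above the graph:
\[
h(t)=(1-t)\bigl(\varphi(x')+r\bigr)+t\varphi(\xi')-\varphi\bigl(x'+t(\xi'-x')\bigr).
\]
I compare $\varphi$ at the point $x'+t(\xi'-x')$ with its value at $\xi'$, which gives
\[
\varphi\bigl(x'+t(\xi'-x')\bigr)\le\varphi(\xi')+(1-t)L\abs{\xi'-x'}.
\]
Combining this with $\varphi(x')-\varphi(\xi')\ge -L\eta r$ yields
\[
h(t)\ge(1-t)\,(r-2L\eta r)\ge\tfrac12(1-t)\,r ,
\]
since $2L\eta\le\tfrac12$. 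This bound carries two pieces of information:
\begin{itemize}
\item $h(t)>0$ for every $t<1$, so $[x,\xi)\subset\Omega_\varphi$;
\item by the comparison of the first paragraph, $d_{\Omega_\varphi}(z_t)\ge \frac{(1-t)r}{2(1+L)}$.
\end{itemize}

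For the thickness condition I also need an upper bound on the length of the segment:
\[
\abs{x-\xi}\le \abs{x'-\xi'}+\abs{x_n-\varphi(\xi')}\le \eta r+r+L\eta r\le 2r.
\]
Hence
\[
\alpha\min\{t,1-t\}\abs{x-\xi}\le 2\alpha(1-t)r=\frac{(1-t)r}{2(1+L)}\le d_{\Omega_\varphi}(z_t),
\]
where the equality is exactly the choice $\alpha=\frac1{4(1+L)}$. Therefore $\xi\in\partial^{\mathrm{str}}_{x,\alpha}\Omega_\varphi$, which proves $E_x\subset\partial^{\mathrm{str}}_{x,\alpha}\Omega_\varphi$. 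The one point requiring care is to use the Lipschitz bound centred at $\xi'$, not at $x'$: this is what makes the lower bound on $h(t)$ degenerate like $(1-t)$ as $t\uparrow1$, rather than fail near the endpoint.

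It remains to bound the content from below. The orthogonal projection onto $\R^{n-1}$ is $1$-Lipschitz and maps $E_x$ onto the closed ball $\overline{B}(x',\eta r)\subset\R^{n-1}$. Content does not increase under $1$-Lipschitz maps (a covering of $E_x$ by balls of radii $r_i$ projects to a covering by balls of radii $r_i$). It is then enough to know that $\mathcal H^{n-1}_\infty$ of an $(n-1)$-dimensional ball of radius $s$ is $\gtrsim_n s^{n-1}$. This follows because any cover by balls $B(x_i,r_i)$ satisfies $\sum r_i^{n-1}\gtrsim_n \mathcal L^{n-1}\bigl(\overline{B}(x',s)\bigr)$. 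Consequently
\[
\mathcal H^{n-1}_\infty(E_x)\gtrsim_n(\eta r)^{n-1}\gtrsim_{n,L}d_{\Omega_\varphi}(x)^{n-1},
\]
using $r\ge d_{\Omega_\varphi}(x)$. I do not expect any step to be a real obstacle; the only delicate point is the constant bookkeeping, namely checking that $\eta=\alpha=\frac1{4(1+L)}$ simultaneously gives $2L\eta\le\frac12$ and $\abs{x-\xi}\le 2r$.
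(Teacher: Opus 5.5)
Your proof is correct and follows the paper's argument essentially step for step. It uses the same height estimate with the Lipschitz bound centred at $\xi'$, the same constant checks $2L\eta\le\tfrac12$ and $\abs{x-\xi}\le 2r$, and the same $1$-Lipschitz projection for the content bound. The only cosmetic difference is in proving $d_{\Omega_\varphi}(y)\ge (y_n-\varphi(y'))/(1+L)$: you bound the distance to each graph point directly, whereas the paper shows $B(x,r/(1+L))\subset\Omega_\varphi$ and reuses that argument at $z_t$.
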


\begin{proof}
The bound $d_{\Omega_\varphi}(x)\le r$ is immediate from $(x',\varphi(x'))\in\partial\Omega_\varphi$. For the lower bound put $\rho:=r/(1+L)$; if $z=(z',z_n)\in B(x,\rho)$ then $z_n>x_n-\rho$ and $\varphi(z')\le\varphi(x')+L\rho$, so
\[
z_n-\varphi(z')\ge x_n-\varphi(x')-(1+L)\rho=0 ,
\]
and in fact $z_n>\varphi(z')$ since the ball is open; hence $B(x,\rho)\subset\Omega_\varphi$ and $d_{\Omega_\varphi}(x)\ge\rho$.

Let $\xi=(\xi',\varphi(\xi'))\in E_x$ and $z_t:=(1-t)x+t\xi$ for $0<t<1$. By the Lipschitz property,
\[
\varphi(z_t')\le\varphi(\xi')+L\abs{z_t'-\xi'}=\varphi(\xi')+L(1-t)\abs{x'-\xi'},
\]
whence
\[
z_{t,n}-\varphi(z_t')\ge(1-t)\bigl(x_n-\varphi(\xi')-L\abs{x'-\xi'}\bigr)
\ge(1-t)\bigl(r-2L\abs{x'-\xi'}\bigr)\ge\tfrac12(1-t)r>0,
\]
using $\varphi(\xi')\le\varphi(x')+L\abs{x'-\xi'}$ and $2L\abs{\xi'-x'}\le2L\eta r\le\frac12r$. In particular $[x,\xi)\subset\Omega_\varphi$, and the same argument as above converts the height estimate into
\[
d_{\Omega_\varphi}(z_t)\ge\frac{1}{1+L}\cdot\frac{(1-t)r}{2}=\frac{(1-t)r}{2(1+L)} .
\]
Since $(1+L)\eta=\frac14$ we have $\abs{x-\xi}\le\abs{x'-\xi'}+\abs{x_n-\varphi(\xi')}\le\eta r+r+L\eta r\le2r$, so
\[
d_{\Omega_\varphi}(z_t)\ge\frac{1}{4(1+L)}\min\{t,1-t\}\abs{x-\xi},
\]
that is, $\xi\in\partial^{\mathrm{str}}_{x,\alpha}\Omega_\varphi$. Finally the projection $\pi(y',y_n)=y'$ is $1$-Lipschitz and $\pi(E_x)=\overline{B^{n-1}(x',\eta r)}$, so $\mathcal H^{n-1}_\infty(E_x)\gtrsim_n(\eta r)^{n-1}\gtrsim_{n,L}r^{n-1}$, and $r$ is comparable to $d_{\Omega_\varphi}(x)$.
\end{proof}

\begin{remark}\label{rem:lipschitz-local}
\Cref{prop:lipschitz-straight} is stated for a global epigraph, but the same geometry appears locally in bounded Lipschitz domains after flattening the boundary in a coordinate chart. Together with \Cref{thm:rectifiable} this says that, for these classes, $\partial^{\mathrm{str}}_{x,\alpha}\Omega$ is exactly of dimension $n-1$: no more by rectifiability, no less by the above. The corresponding Hardy range is $\beta<p-1$, which for Lipschitz domains is the classical one.
\end{remark}

\section{Strictness of the inclusion into the visual boundary}\label{sec:strict}

The inclusion \eqref{eq:str-in-visual} cannot be reversed, and the failure is already visible at a single boundary point. The example also illustrates \Cref{rem:no-cone-for-dir}: the point $\xi$ below lies in $\partial_x^{\mathrm{dir}}\Omega$ and has no interior cone at all.

\begin{proposition}[Strictness]\label{prop:strict-inclusion}
There exist a bounded planar domain $\Omega\subset\R^2$, a point $x\in\Omega$ and a boundary point $\xi\in\partial\Omega$ such that
\begin{enumerate}[label=(\roman*)]
\item $\xi\in\partial_x^{\mathrm{dir}}\Omega$;
\item $\xi\notin\partial^{\mathrm{str}}_{x,\alpha}\Omega$ for every $\alpha\in(0,1]$;
\item $\xi\in v_x(4)\text{--}\partial\Omega$.
\end{enumerate}
\end{proposition}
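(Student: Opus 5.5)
We will build an explicit planar domain in which the point $\xi$ is the vertex of an outward parabolic cusp of the complement, placed so that the segment $[x,\xi]$ runs tangent to that cusp. Near $\xi$ the segment then lies inside $\Omega$ but at distance only quadratically small from $\partial\Omega$, which violates the linear thickness condition for every $\alpha$. A bent curve that arrives at $\xi$ along a transversal direction still satisfies the Koskela--Lehrb\"ack condition. Concretely, set
\[
\Omega:=\{(a,b)\in B(0,2):\ b>-a^2\},\qquad \xi:=(0,0),\qquad x:=(1,0).
\]
$\Omega$ is open and bounded. It is connected because it is star-shaped with respect to, say, $(0,1)$: the region above a parabola of this kind, cut by a convex disc, is star-shaped from a point high on the axis, and this is easily checked. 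Also $\xi\in\partial\Omega$, since the points $(0,-\varepsilon)$ do not belong to $\Omega$.

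For (i), every point $(s,0)$ with $0<s\le1$ satisfies $0>-s^2$ and lies in $B(0,2)$. Hence $[x,\xi)\subset\Omega$ and $\xi\in\partial_x^{\mathrm{dir}}\Omega$.

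For (ii), take the point of the segment $(1-t)x+t\xi=(s,0)$ with $s=1-t$. The boundary point $(s,-s^2)$ is at distance $s^2$, so $d_\Omega((s,0))\le s^2$. The required lower bound is $\alpha\min\{t,1-t\}\abs{x-\xi}=\alpha s$ for $s<\tfrac12$. This fails as soon as $s<\alpha$, for every $\alpha\in(0,1]$. Alternatively, one can argue that \Cref{lem:cone} would produce a cone at $\xi$ with axis $(1,0)$, and any such cone contains points $(r,-\tfrac{\alpha}{4}r)$ with $-\tfrac\alpha4 r<-r^2$ false only for large $r$. The direct estimate is cleaner, so I will use it.

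For (iii), take the polygonal curve $\gamma$ from $x=(1,0)$ to $p:=(\tfrac12,\tfrac12)$ and then from $p$ straight to $\xi$ along the diagonal, parametrised by arc length. Its length is $\ell=2\cdot\tfrac{\sqrt2}{2}=\sqrt2\le 4=4\abs{x-\xi}$. The thickness condition $d_\Omega(\gamma(t))\ge\tfrac14\min\{t,\ell-t\}$ is checked in three pieces.
\begin{itemize}
\item On the final leg $\gamma=(\sigma,\sigma)$, $\sigma\in(0,\tfrac12]$, the relevant quantity is $\ell-t=\sqrt2\,\sigma$. The parabola part of $\partial\Omega$ lies in $\{b\le0\}$, so its distance from $(\sigma,\sigma)$ is at least $\sigma$. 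The circle $\partial B(0,2)$ is at distance at least $2-\tfrac{\sqrt2}{2}>1$. Hence $d_\Omega\ge\sigma\ge\tfrac14\sqrt2\,\sigma$.
\item On the first leg, $\gamma(t)=(1-\tfrac{t}{\sqrt2},\tfrac{t}{\sqrt2})$. Its distance to $\{b\le -a^2\}$ is at least the distance to the parabola. One shows this is bounded below by a fixed positive constant at the starting point: the distance from $(1,0)$ to the parabola is the minimum of $(a-1)^2+a^4$, which is about $0.16$, so the distance is about $0.4$. The distance then increases along the leg, or at least stays above $c\,\min\{t,\ell-t\}/4$, since $\min\{t,\ell-t\}\le\sqrt2/2$ and we only need $d_\Omega\ge 0.18$ or so. The circle is at distance $>1$ throughout.
\end{itemize}

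The main obstacle is this last numerical verification on the first leg. That is, I must confirm a uniform lower bound of $d_\Omega$ along $[x,p]$ that dominates $\tfrac14\min\{t,\ell-t\}$, which is at most about $0.18$. If the explicit minimisation is awkward, I would instead choose $x$ farther from the cusp relative to the scale, e.g.\ rescale the parabola to $b>-\varepsilon a^2$. Then $d_\Omega(x)$ is close to $\abs{x}\cdot$const, while (ii) still holds because $d_\Omega((s,0))\le\varepsilon s^2$. With that adjustment the constant $4$ in (iii) has ample room.
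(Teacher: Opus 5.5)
Your construction is correct and takes a genuinely different route from the paper's, but two justifications need repair.

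First, $\Omega$ is \emph{not} star-shaped with respect to $(0,1)$. The point $(1.2,-1.43)$ lies in $\Omega$, yet the segment from $(0,1)$ to it passes through $(1.02,-1.0655)$, which lies below the parabola since $-1.0655<-1.02^2$. Connectedness still holds: for $\abs{a}<2$ each vertical section $\Omega\cap(\{a\}\times\R)$ is a segment containing $(a,\tfrac12\sqrt{4-a^2})$, and these points form a connected arc. Alternatively, $\Omega$ is star-shaped with respect to $(0,c)$ whenever $(\sqrt{17}-1)/2\le c<2$.

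Second, on the first leg your numerics are off: the minimum of $(a-1)^2+a^4$ is about $0.29$, not $0.16$. The claimed monotonicity of the distance is also only asserted. A one-line bound suffices. On $\{b\le-a^2\}$ one has $a+b\le a-a^2\le\tfrac14$, while the first leg lies on the line $a+b=1$ inside $\overline{B(0,1)}$. Hence $d_\Omega\ge\min\{\tfrac{3}{4\sqrt2},1\}=\tfrac{3\sqrt2}{8}$ there, three times the required $\tfrac14\cdot\tfrac{\sqrt2}{2}$.

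Note also that there is no cusp. Near $\xi$ the complement is the convex region under $b=-a^2$, so $\partial\Omega$ is smooth at $\xi$, and the mechanism is pure tangency of the segment.

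The paper instead removes from $R=(0,1)\times(-1,1)$ the teeth $T_k=\{1-\tfrac1{k+1}\}\times[2^{-k},\tfrac12]$, which accumulate at $\xi=(1,0)$. Its segment runs along $\{b=0\}$ and meets the edge $\{a=1\}$ orthogonally. Thickness fails because the tips of the teeth come within $2^{-k}$ of the segment at distance $\asymp 1/k$ from $\xi$. This requires a nearest-point analysis (the check $2^{-k}<[(k+1)(k+2)]^{-1}$ for $k\ge6$) and a curve that dips under the teeth.

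You replace the obstacles by a single smooth boundary met tangentially. Then (ii) reduces to $d_\Omega((s,0))\le s^2<\alpha s$ for $s<\alpha$, and the curve simply enters along the diagonal.

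Each route buys something different. Yours is shorter and shows that strictness already occurs at a $C^\infty$ boundary point of a piecewise smooth Lipschitz domain, so the gap between $\partial^{\mathrm{str}}_{x,\alpha}\Omega$ and $v_x(c)\text{--}\partial\Omega$ can come from the angle of approach alone. The paper's shows the complementary fact: the gap persists even when the segment is transversal to the macroscopic boundary at $\xi$. Thin obstacles that a John-type curve can bypass are then enough to destroy the thickness condition.
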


\begin{proof}
Let $R:=(0,1)\times(-1,1)$. For $k\ge1$ set
\[
t_k:=1-\frac{1}{k+1},\qquad \delta_k:=2^{-k},
\]
and define the closed vertical segments (``teeth'') $T_k:=\{t_k\}\times[\delta_k,\tfrac12]$. Since the $t_k$ are pairwise distinct, the $T_k$ are pairwise disjoint closed sets, so
\[
\Omega:=R\setminus\bigcup_{k\ge1}T_k
\]
is open; it is connected because every $T_k$ lies at height $y\ge\delta_k>0$, so any two points of $\Omega$ can be joined by a path through the strip $\{y=-\tfrac12\}$, which meets no $T_k$. Fix $x:=(\tfrac14,0)\in\Omega$ and $\xi:=(1,0)$. Every point of $\Omega$ has first coordinate $<1$, and $(t_k,0)\in\Omega$ for every $k$ by (i) below, with $t_k\to1$; hence $\xi\in\partial\Omega$.

\emph{(i)} The segment $[x,\xi)=\{(t,0):\tfrac14\le t<1\}$ lies on the line $y=0$, while $T_k\subset\R\times[\delta_k,\tfrac12]$ with $\delta_k>0$; hence $[x,\xi)\subset\Omega$ and $\xi\in\partial_x^{\mathrm{dir}}\Omega$.

\emph{(ii)} For $k\ge6$ one has $2^{-k}<[(k+1)(k+2)]^{-1}$ (at $k=6$, $2^{-6}=1/64<1/56$; the left side decays exponentially and the right side only polynomially). For such $k$ the nearest point of $\partial\Omega$ to $(t_k,0)$ is the tip $(t_k,\delta_k)$ of $T_k$: for $j\ne k$ the segment $T_j$ lies at distance at least $\abs{t_k-t_j}\ge\min\bigl(\tfrac1{k(k+1)},\tfrac1{(k+1)(k+2)}\bigr)=\tfrac1{(k+1)(k+2)}>\delta_k$, and the edges of $R$ are at distance at least $\min(t_k,1-t_k,1)=1/(k+1)\gg\delta_k$. Hence $d_\Omega(t_k,0)=\delta_k=2^{-k}$. Writing $(t_k,0)=(1-s_k)x+s_k\xi$ gives $s_k=(4t_k-1)/3$, so $1-s_k=\tfrac{4}{3(k+1)}=\min\{s_k,1-s_k\}$, while $\abs{x-\xi}=\tfrac34$. If $\xi$ belonged to $\partial^{\mathrm{str}}_{x,\alpha}\Omega$, then \Cref{def:visible} at $t=s_k$ would give
\[
2^{-k}=d_\Omega(t_k,0)\ge\alpha\cdot\frac{4}{3(k+1)}\cdot\frac34=\frac{\alpha}{k+1},
\]
that is $\alpha\le(k+1)2^{-k}$ for every $k\ge6$. Since $(k+1)2^{-k}\to0$, no $\alpha\in(0,1]$ qualifies.

\emph{(iii)} Let $\gamma$ be the arc-length parametrised polygonal path from $x=(\tfrac14,0)$ through $(\tfrac14,-\tfrac14)$ and $(\tfrac34,-\tfrac14)$ to $\xi=(1,0)$, of total length
\[
\ell=\frac14+\frac12+\frac{\sqrt2}{4}=\frac{3+\sqrt2}{4}<3=4\abs{x-\xi}.
\]
On the first piece every point has first coordinate $\tfrac14$, hence horizontal distance at least $t_1-\tfrac14=\tfrac14$ from every tooth and from the left edge of $R$, so $d_\Omega\ge\tfrac14$ there. On the second piece every point has second coordinate $-\tfrac14$, hence vertical distance at least $\delta_k+\tfrac14>\tfrac14$ from every $T_k$, and horizontal distance at least $\tfrac14$ from the vertical edges of $R$; again $d_\Omega\ge\tfrac14$. On the third piece, parametrised by $\sigma\in[0,1]$ as $\bigl(\tfrac34+\tfrac\sigma4,-\tfrac14+\tfrac\sigma4\bigr)$, the second coordinate is $\le0<\delta_k$ for every $k$, so the distance to $T_k$ is at least $\delta_k+\tfrac{1-\sigma}{4}>\tfrac{1-\sigma}4$, and the distance to the right edge of $R$ equals $\tfrac{1-\sigma}4$; since the remaining arc length to $\xi$ is $(1-\sigma)\tfrac{\sqrt2}{4}$,
\[
d_\Omega(\gamma(t))\ge\frac{1-\sigma}{4}=\frac1{\sqrt2}(\ell-t)\ge\frac14(\ell-t).
\]
As $\ell/2<\tfrac34$, one has $\min\{t,\ell-t\}=\ell-t$ on the third piece and $\min\{t,\ell-t\}\le\ell/2<1$ on the first two, where $d_\Omega\ge\tfrac14$; hence $d_\Omega(\gamma(t))\ge\tfrac14\min\{t,\ell-t\}$ for all $t\in(0,\ell)$. Together with $\ell\le4\abs{x-\xi}$ this gives $\xi\in v_x(4)\text{--}\partial\Omega$.
\end{proof}

\begin{remark}\label{rem:strict-vs-cone}
By \Cref{lem:cone}, assertion (ii) is equivalent to the absence of an interior cone at $\xi$ with axis pointing towards $x$, and this is what the computation in (ii) verifies quantitatively: the teeth descend to height $\delta_k=2^{-k}$ at horizontal distance $1/(k+1)$ from $\xi$, and $2^{-k}(k+1)\to0$. Assertion (iii) shows that a curve may nevertheless avoid the teeth at bounded cost, since the region below them is free of obstructions. The two assertions together say that between the segment and an arbitrary John-type curve there is a genuine geometric gap.
\end{remark}

\section{Raw straight visibility}\label{sec:raw}

We finally consider $\partial_x^{\mathrm{dir}}\Omega$, where no thickness is imposed. Here \Cref{thm:rectifiable} does not apply, and the situation is genuinely different. We first note that below the critical exponent the criterion is valid, and for the same reason as in \Cref{thm:dichotomy}(ii): visibility plays no role.

\begin{proposition}[Subcritical raw visibility]\label{prop:raw-subcritical}
Let $\Omega\subsetneq\R^n$ be a domain, $1<p<\infty$, and suppose that there are $c_0>0$, $C_0\ge2$ and $0\le\lambda\le n-1$ with
\begin{equation}\label{eq:raw-hypothesis}
\mathcal H^\lambda_\infty\bigl(\partial_x^{\mathrm{dir}}\Omega\cap B(x,C_0d_\Omega(x))\bigr)\ge c_0\,d_\Omega(x)^\lambda
\qquad\text{for every }x\in\Omega .
\end{equation}
Then $\Omega$ admits the $(p,\beta)$-Hardy inequality for every $\beta<p-n+\lambda$.
\end{proposition}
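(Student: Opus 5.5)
The argument copies the proof of \Cref{thm:dichotomy}(ii) word for word, with $\partial^{\mathrm{str}}_{x,\alpha}\Omega$ replaced by $\partial_x^{\mathrm{dir}}\Omega$. Visibility plays no part. The only facts about $\partial_x^{\mathrm{dir}}\Omega$ that we use are that it is a subset of $\partial\Omega$ and that Hausdorff content is monotone.

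\emph{Step 1.} Since $\partial_x^{\mathrm{dir}}\Omega\subset\partial\Omega$ by \Cref{def:visible}, monotonicity of $\mathcal H^\lambda_\infty$ turns \eqref{eq:raw-hypothesis} into
\[
\mathcal H^\lambda_\infty\bigl(\partial\Omega\cap B(x,C_0d_\Omega(x))\bigr)\ge c_0\,d_\Omega(x)^\lambda
\qquad\text{for every }x\in\Omega .
\]

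\emph{Step 2.} This is exactly the hypothesis of \Cref{lem:radius}, because $\Omega\subsetneq\R^n$ is a domain and $C_0\ge2$. The lemma shrinks the radius to $2d_\Omega(x)$ and gives
\[
\mathcal H^\lambda_\infty\bigl(\partial\Omega\cap B(x,2d_\Omega(x))\bigr)\ge c_0C_0^{-\lambda}\,d_\Omega(x)^\lambda
\qquad\text{for every }x\in\Omega .
\]

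\emph{Step 3.} Since $0\le\lambda\le n-1$, \Cref{thm:density-criterion} applies with constant $c_0C_0^{-\lambda}$. It yields the $(p,\beta)$-Hardy inequality for every $\beta<p-n+\lambda$.

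No step here is a real obstacle. The one place that needs care is that the radius in \eqref{eq:raw-hypothesis} is $C_0d_\Omega(x)$, whereas \Cref{thm:density-criterion} requires radius $2d_\Omega(x)$. \Cref{lem:radius} handles this: one applies the hypothesis at an auxiliary point $y$ close to a nearest boundary point, and for that the hypothesis must hold at every point of $\Omega$, which it does. The restriction $\lambda\le n-1$ is needed only to invoke \Cref{thm:density-criterion}. For $\lambda>n-1$ this argument gives nothing, which is why that case is left open as \Cref{prob:raw-fractal}.
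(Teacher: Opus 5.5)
Your proposal is correct and follows the paper's proof step for step. You use monotonicity of $\mathcal H^\lambda_\infty$ via $\partial_x^{\mathrm{dir}}\Omega\subset\partial\Omega$, then \Cref{lem:radius} to pass from radius $C_0d_\Omega(x)$ to $2d_\Omega(x)$ with constant $c_0C_0^{-\lambda}$, then \Cref{thm:density-criterion}, and you correctly note that the lemma needs the hypothesis at the auxiliary point $y$ and hence at every point of $\Omega$.
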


\begin{proof}
Since $\partial_x^{\mathrm{dir}}\Omega\subset\partial\Omega$, the hypothesis \eqref{eq:raw-hypothesis} gives
$\mathcal H^\lambda_\infty(\partial\Omega\cap B(x,C_0d_\Omega(x)))\ge c_0 d_\Omega(x)^\lambda$ for every $x\in\Omega$. By \Cref{lem:radius} the density condition of \Cref{thm:density-criterion} holds with constant $c_0C_0^{-\lambda}$, and the conclusion follows.
\end{proof}

The interesting range is therefore $\lambda>n-1$, and there the obstruction of \Cref{sec:rectifiable} is absent. Indeed, $\partial_x^{\mathrm{dir}}\Omega$ is a radial graph --- if $\xi,\xi'\in\partial_x^{\mathrm{dir}}\Omega$ lie on a common ray from $x$ with $\abs{x-\xi}<\abs{x-\xi'}$, then $\xi\in[x,\xi')\subset\Omega$, which is impossible --- but the radial function is subject to no regularity whatsoever, and radial graphs of irregular functions may have Hausdorff dimension larger than $n-1$. Fractal boundaries that are radially accessible are the natural candidates: the von Koch snowflake, whose boundary is Ahlfors regular of dimension $\log4/\log3>1$, is the first one to examine.

\begin{problem}\label{prob:raw-fractal}
Let $\lambda>n-1$.
\begin{enumerate}[label=(\alph*)]
\item Does there exist a domain $\Omega\subset\R^n$ satisfying \eqref{eq:raw-hypothesis} with this $\lambda$? Equivalently, can the directly visible boundary carry $\lambda$-content at the scale $d_\Omega(x)$ uniformly in $x$?
\item If so, does \eqref{eq:raw-hypothesis} imply the $(p,\beta)$-Hardy inequality for $\beta<p-n+\lambda$, that is, in the range $\beta\ge p-1$ where the density criterion of \Cref{thm:density-criterion} is unavailable?
\end{enumerate}
\end{problem}

A positive answer to both parts would exhibit the first genuinely segment-based criterion reaching $\beta\ge p-1$, and by \Cref{thm:rectifiable} such a criterion would necessarily be thickness-free. A negative answer to (a) would show that the phenomenon isolated in \Cref{thm:rectifiable} survives the removal of the thickness condition, and that straight lines are intrinsically unable to see a fractal amount of boundary; a negative answer to (b) would identify the thickness condition as the exact price of accessibility.

\begin{remark}\label{rem:final}
\Cref{prob:raw-fractal}(b) should be compared with \Cref{prop:strict-inclusion}, which shows that a directly visible point need not be quantitatively visible along the segment, but may still be reached by a curve. Whether raw straight visibility forces John-type accessibility in the quantitative sense required by \Cref{thm:visual-criterion} --- uniformly in $x$, which is what the Hardy inequality needs --- is precisely the geometric content of \Cref{prob:raw-fractal}(b).
\end{remark}

\section{Concluding remarks}\label{sec:concluding}

The results above delimit the reach of straight-line visibility in the theory of weighted Hardy inequalities. A uniform thickness condition along a segment is not a technical convenience: it manufactures an interior cone at the endpoint, and cones force $(n-1)$-rectifiability. Consequently the quantitatively straight visible boundary is always, at the scale $d_\Omega(x)$, an $(n-1)$-dimensional object, and any criterion built on it lives in the range $\beta<p-1$, where boundary density alone already suffices. What is lost is exactly the regime $\beta\ge p-1$, for which the curve-based visual boundary of \cite{KoskelaLehrback2009} was designed and in which it cannot be replaced by segments.

Two directions remain. The first is \Cref{prob:raw-fractal}: without the thickness condition the cone disappears, the rectifiability obstruction with it, and the question of what straight lines can see becomes a question about radial graphs over fractal boundaries. The second concerns the fractional setting, where visibility is already known to play an essential role \cite{DydaLehrbackVahakangas2014,IhnatsyevaLehrbackTuominenVahakangas2014}; since \Cref{lem:cone} and \Cref{thm:rectifiable} are purely geometric statements about $\partial^{\mathrm{str}}_{x,\alpha}\Omega$, they apply verbatim there, and the same dichotomy should be expected for any fractional criterion formulated in terms of quantitatively straight visible sets.

\subsection*{Authors' contributions}
All authors contributed equally to this work. All authors read and approved the final manuscript.
\subsection*{Funding} The authors declare that there is no source of funding for this research.
\subsection*{Availability of data and materials} Data sharing not applicable to this paper as no data sets were generated or analyzed during the current study.
\subsection*{Competing interests}
The authors declare that they have no competing interests.

\end{document}